\documentclass[11pt]{amsart}
\usepackage[dvips]{graphicx}
\usepackage{amsmath,graphics}
\usepackage{amsfonts,amssymb}
\usepackage{xypic}
\usepackage{comment}
\usepackage{color}
\usepackage{stmaryrd}
\specialcomment{proofc}{}{}
\includecomment{proofc}
\theoremstyle{plain}
\newtheorem*{theorem*}{Theorem}
\newtheorem*{lemma*} {Lemma}
\newtheorem*{corollary*} {Corollary}
\newtheorem*{proposition*}{Proposition}
\newtheorem*{conjecture*}{Conjecture}
\newtheorem{theorem}{Theorem}[section]
\newtheorem{lemma}[theorem]{Lemma}
\newtheorem*{theorem1*}{Theorem 1}
\newtheorem*{theorem2*}{Theorem 2}
\newtheorem*{theorem3*}{Theorem 3}

\newtheorem{proposition}[theorem]{Proposition}

\theoremstyle{remark}
\newtheorem*{remark}{Remark}

\newtheorem*{note}{Addendum}

\newtheorem{example*}{Example}

\newtheorem*{ack}{Acknowledgement}
\theoremstyle{definition}

\textwidth 6in    
\oddsidemargin.25in    
\evensidemargin.25in     
\marginparwidth=.85in

   \def\Z{\Bbb{Z}}  \def\C{\Bbb{C}}
\def\N{\Bbb{N}}    
 \def\a{\alpha}   \def\bp{\begin{pmatrix}}
 \def\ep{\end{pmatrix}} \def\bn{\begin{enumerate}} 
   \def\en{\end{enumerate}}
\def\ba{\begin{array}} \def\ea{\end{array}}  
 \def\S{\Sigma}  \def\a{\alpha}

\def\be{\begin{equation}} \def\ee{\end{equation}}

\setcounter{section}{1}

\begin{document}
\title{The Slope of Surfaces with Albanese Dimension One}
\subjclass[2010]{14J29}

\author{Stefano Vidussi}
\address{Department of Mathematics, University of California,
Riverside, CA 92521, USA} \email{svidussi@ucr.edu}
\date{\today}

\begin{abstract} Mendes Lopes and Pardini showed that minimal general type surfaces of  Albanese dimension one have slopes $K^2/\chi$ dense in the interval $[2,8]$. This result  was completed to cover the admissible interval $[2,9]$ by Roulleau and Urzua, who proved that surfaces with fundamental group equal to that of any curve of genus $g \geq 1$ (in particular, having Albanese dimension one) give a set of slopes dense in $[6,9]$. In this note we provide a second construction that complements that of Mendes Lopes--Pardini, to recast a dense set of slopes in $[8,9]$ for surfaces of Albanese dimension one. These surfaces arise as ramified double coverings of cyclic covers of the Cartwright--Steger surface.

\end{abstract}

\maketitle

The basic topological invariants of a minimal irregular surface of general type, $K^2$ and $\chi$, lie in the forward cone in the $(K^2,\chi)$--plane delimited by the Bogomolov--Miyaoka--Yau line $K^2 = 9 \chi$ and the Noether line $K^2 = 2 \chi$. Phrased otherwise, the slope $K^2/\chi$ is contained in the interval $[2,9]$. In \cite{So84} Sommese proved that the set of attainable slopes is dense in $[2,9]$. Several refinements of that result, tied to specific classes of surfaces, have since appeared. In particular Mendes Lopes and Pardini showed in \cite{PML11} that the slope of surfaces of Albanese dimension one is dense in $[2,8]$. Their examples are obtained by taking the double coverings of hyperelliptic genus--$2$ fibrations over a torus, which lie on the line $K^2 = 2 \chi$ and have irregularity $q = 1$, ramified over a collection of general fibers. The proof that slopes of general type surfaces of Albanese dimension one are dense in the entire interval $[2,9]$ was then completed by Roulleau and Urzua in \cite{RU15} (as a corollary of their main result) by showing that  surfaces whose fundamental group is equal to that of a curve of genus $g \geq 1$ have slopes which are dense in $[6,9]$. 

In this note we will provide an alternative construction of minimal surfaces of general type and Albanese dimension one  with slopes dense in the interval $[8,9]$. Our construction follows the strategy of \cite{So84,PML11} (to whom the template should be credited, even if we will be a bit cavalier in punctually referring to), using as building block cyclic covers of the Cartwright--Steger surface and adapting the template to reflect the specifics of this case. The main adjustment involves the identification of an infinite family of unramified cyclic covers of the Cartwright--Steger surface  that retain irregularity $q = 1$.  

We summarize some properties of the Cartwright--Steger surface in the following proposition, referring to \cite{CS10,CKY17} for proof and further details:
\begin{proposition} \label{prop:cs} The Cartwright--Steger surface is a minimal surface of general type with $(K^2,\chi) = (9,1)$, irregularity $q = 1$ and $H_1(X,\Z) = \Z^2$. The Albanese fibration $f \colon X \to T$ has generic fiber $F$ of genus $g(F) = 19$ and no multiple fibers.   \end{proposition}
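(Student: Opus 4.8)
The plan is to read off every invariant from the realization of $X$ as a ball quotient together with the explicit fundamental group of Cartwright--Steger. First I would recall that $X = \Bbb{B}^2/\Pi$ is a smooth compact quotient of the complex two-ball by a torsion-free cocompact lattice $\Pi \subset \mathrm{PU}(2,1)$. As such $K_X$ is ample, so $X$ is automatically a minimal surface of general type, and $X$ sits on the Bogomolov--Miyaoka--Yau line $c_1^2 = 3c_2$. Feeding the Euler number $e(X) = c_2 = 3$ (computed from the covolume of $\Pi$) into this equality and into Noether's formula $12\chi = c_1^2 + c_2$ yields $K^2 = c_1^2 = 9$ and $\chi = 1$, i.e. $(K^2,\chi) = (9,1)$ with slope exactly $9$.

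Next I would obtain $q$ and $H_1$ from the abelianization of $\Pi$. The explicit presentation of $\Pi$ furnished by Cartwright--Steger gives $H_1(X,\Z) = \Pi^{\mathrm{ab}} = \Z^2$; in particular it is torsion-free of rank two, so $b_1(X) = 2$ and the irregularity is $q = b_1/2 = 1$. This is precisely the feature separating $X$ from the fake projective planes, for which $q = 0$.

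For the Albanese fibration, since $q = 1$ the Albanese torus $T = \mathrm{Alb}(X)$ is an elliptic curve and the Albanese map $a \colon X \to T$, obtained by integrating the holomorphic one-form (unique up to scale), is nonconstant. Its image is a curve generating $T$, hence equals $T$, so $a$ is surjective; as $X$ has Albanese dimension one, Stein factorization together with the connectedness of the generic fibre identifies $a$ with a genus-$g$ fibration $f \colon X \to T$. The two remaining assertions --- that $g(F) = 19$ and that $f$ carries no multiple fibres --- are the substantive input I would import from \cite{CKY17}: the fibre genus comes from adjunction $2g(F) - 2 = K_X \cdot F$ once the numerical class of $F$ is pinned down (equivalently from the analysis of the singular fibres, whose total Euler-number contribution must equal $e(X) = 3$), while the absence of multiple fibres follows from their description of the fibration.

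The main obstacle is exactly the value $g(F) = 19$. It is not forced by the global invariants $(K^2,\chi,q)$: the fibration formulas give only $\deg f_*\omega_{X/T} = \chi(\mathcal{O}_X) = 1$ and $\sum_s\bigl(e(F_s) - e(F)\bigr) = 3$, and neither constrains $g$. Determining it therefore genuinely requires the explicit geometry of the lattice $\Pi$ and of the Albanese pencil, which is where I would defer to the computations of \cite{CS10,CKY17}.
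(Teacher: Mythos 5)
The paper offers no proof of this proposition: it is stated explicitly as a summary of known results, "referring to \cite{CS10,CKY17} for proof and further details." Your reconstruction is correct and consistent with that --- the invariants $(K^2,\chi)=(9,1)$, $q=1$ and $H_1(X,\Z)=\Z^2$ do follow formally from the ball-quotient structure and the abelianization of the Cartwright--Steger lattice exactly as you describe, and you rightly identify $g(F)=19$ and the absence of multiple fibres as the substantive inputs that must genuinely be imported from \cite{CKY17} rather than deduced from the global numerical invariants.
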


Our construction will start by taking unramified cyclic covers of the Cartwright--Steger surface. We will need to determine the irregularities (or equivalently the first Betti numbers) of those covers. In order to do so, we will use information on the Green--Lazarsfeld sets of the fundamental group of the Cartwright--Steger surface.
 Recall that the Green--Lazarsfeld sets of the fundamental group $G := \pi_1(X)$ of a K\"ahler variety $X$  are subsets of the character variety ${\widehat G} := Hom(G;\C^{*})$ defined as the collection of the cohomology jumping loci, namely \[ {W}_{i}(G) = \{ \xi \in {\widehat G} ~ | h^1(G;\C_{\xi}) \geq i \},\] nested by the \textit{depth} $i$: ${W}_{i}(G) \subset {W}_{i-1}(G) \subset \ldots \subset {W}_{0}(G) = {\widehat G}$. Moreover, the trivial character ${\hat 1} \in {\widehat G}$ is contained in $W_{i}(G)$ if and only if $i \leq b_{1}(G)$. 

The next proposition is a consequence of the previous, and of structure theorems on the Green--Lazarsfeld sets of a K\"ahler group.

\begin{proposition} \label{prop:gl} The Green--Lazarsfeld sets $W_{i}(G)$ for the Cartwright--Steger surface $X$ are composed of finitely many torsion characters.   \end{proposition}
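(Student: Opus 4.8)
The plan is to combine the structure theorem for the Green--Lazarsfeld sets of a K\"ahler group with the numerical constraints supplied by Proposition~\ref{prop:cs}. By the structure theorem (Beauville, Green--Lazarsfeld, Simpson, Arapura, together with its orbifold refinement due to Catanese and Delzant), each $W_i(G)$ with $i \geq 1$ is a finite union of torsion translates of subtori of $\widehat G$, its positive--dimensional components being exactly the subtori $\rho \cdot f^{*}\widehat{\pi_1^{\mathrm{orb}}(B)}$ attached to the orbifold pencils $f \colon X \to B$ on $X$ --- surjective holomorphic maps with connected fibers onto a curve $B$, carrying the orbifold structure that records the multiplicities of the multiple fibers --- with $\rho$ a torsion character; moreover the isolated points of each $W_i(G)$ are torsion by Simpson's theorem. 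Since $W_i(G) \subseteq W_1(G)$, it suffices to show that $W_1(G)$ has no positive--dimensional component. As the dimension of $f^{*}\widehat{\pi_1^{\mathrm{orb}}(B)}$ equals the free rank $2g(B)$ of $H_1^{\mathrm{orb}}(B)$ (the orbifold points contributing only torsion), a positive--dimensional component forces the base to have genus $g(B) \geq 1$.

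First I would dispose of the high--genus pencils. An orbifold pencil over a base of genus $g(B)$ induces an injection $f^{*} \colon H^1(B,\C) \hookrightarrow H^1(X,\C)$, so that $b_1(X) \geq 2 g(B)$; since $H_1(X,\Z) = \Z^2$ gives $b_1(X) = 2$, this rules out $g(B) \geq 2$, while $g(B) = 0$ yields a finite character group and hence no positive--dimensional torus. There remain the pencils with $g(B) = 1$. Because $q = 1$, the Albanese variety of $X$ is the elliptic curve $T$, and every surjection of $X$ onto a curve of genus one factors through the Albanese map; hence the sole genus--one pencil is the Albanese fibration $f \colon X \to T$ of Proposition~\ref{prop:cs}.

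The crux is to check that this last pencil produces no positive--dimensional component. By Proposition~\ref{prop:cs} the Albanese fibration has no multiple fibers, so its orbifold base is the smooth elliptic curve $T$, of vanishing orbifold Euler characteristic. For a nontrivial character $\eta \in \widehat{\pi_1(T)}$ one has $\chi(T,\C_\eta) = 0$ and $h^0 = h^2 = 0$, whence $h^1(T,\C_\eta) = 0$; thus the only point of the two--dimensional subtorus $f^{*}\widehat{\pi_1(T)}$ lying in $W_1(G)$ is the image of the trivial character. No positive--dimensional component survives, so $W_1(G)$ --- and therefore every $W_i(G)$ with $i \geq 1$ --- coincides with its finite set of isolated points, each of which is torsion. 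I expect the genus--one analysis to be the main obstacle: one must invoke the \emph{orbifold} form of the structure theorem to see that it is exactly the absence of multiple fibers (equivalently, the non--hyperbolicity of the base) that prevents the Albanese pencil from contributing, and one must use $q = 1$ to exclude the existence of a second, genuinely hyperbolic, genus--one pencil.
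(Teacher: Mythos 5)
Your proof is correct, but it follows a route that the paper itself only sketches in the closing parenthesis of its proof; the paper's main argument is genuinely different. You work directly with the orbifold structure theorem for $W_1(G)$ and rule out positive--dimensional components by classifying the positive--genus orbifold pencils: genus $\geq 2$ is excluded by $b_1(X)=2$, and the unique genus--one pencil is the Albanese fibration, which has no multiple fibers and hence a non--hyperbolic orbifold base. The paper instead invokes Delzant's theorem on the Bieri--Neumann--Strebel invariant of K\"ahler groups \cite{De10}: a fibration onto a hyperbolic orbisurface of genus $\geq 1$ would force $[G,G]$ to be infinitely generated, whereas $[G,G] = \ker(f_{*}\colon G \to \Z^2)$ is finitely generated by Catanese's criterion \cite{Cat03} precisely because the Albanese fibration has no multiple fibers. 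The paper's route kills all hyperbolic pencils of positive genus at once without having to classify them; yours is more hands--on, uses $q=1$ to pin down the genus--one pencil, and avoids the BNS machinery. Both hinge on the same key input from Proposition \ref{prop:cs}, the absence of multiple fibers. One small caveat: your assertion that $h^1(T,\C_\eta)=0$ forces $f^{*}\eta \notin W_1(G)$ does not follow from that computation alone, since a priori $H^1(X,\C_{f^{*}\eta})$ could receive a contribution from $H^0(T, R^1 f_{*}\C_{f^{*}\eta})$; but you do not need this pointwise statement --- the structure theorem already guarantees that a non--hyperbolic base contributes no positive--dimensional component, and any remaining isolated points of $W_1(G)$ are torsion by Simpson's theorem.
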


\begin{proof} By the work of  \cite{Bea92,Si93} (that refined previous results of \cite{GL87,GL91}; see also \cite{De08}), $W_1(G)$ is the union of a finite set of torsion characters and the inverse image of the Green--Lazarsfeld sets of the bases $\S_{\a}$ under the finite collection of fibrations $g_{\a} \colon X \to \S_{\a}$ with base given by a hyperbolic orbisurface $\S_{a}$ of genus $g \geq 1$. If $X$ did admit such fibration, the commutator subgroup of $G$ would be infinitely generated, by \cite{De10}. However, as $H_1(G)= \Z^2$, the commutator subgroup $[G,G]$ of $G$ enters in the short exact sequence \begin{equation} \label{eq:homalb} 1 \longrightarrow [G,G] \longrightarrow G \stackrel{f_{*}}{\longrightarrow} \Z^2 \longrightarrow 1 \end{equation} where $f_{*}$ is the map induced, in homotopy, by the Albanese fibration $f \colon X \to T$. Given any fibration $g \colon X \to \S$, the kernel of the induced map in homotopy $g_* \colon \pi_1(X) \to \pi_1(\S)$ is finitely generated if and only if the fibration has no multiple fibers (see e.g. \cite{Cat03}). By Proposition \ref{prop:cs}, the Albanese fibration has no multiple fibers, hence $[G,G]$ is finitely generated.  It follows that $W_{1}(G)$ is composed of finitely many torsion characters, hence so do the Green--Lazarsfeld sets of higher depth. (In fact, as $X$ has Albanese dimension one, the Albanese fibration is the unique fibration of $X$  having base with positive genus, up to holomorphic automorphism of the base. As this fibration does not have multiple fibers, its base is not hyperbolic as orbifold.) \end{proof}

Proposition \ref{prop:gl} allows one, in principle, to determine the first Betti number of any abelian cover of $X$: Given an epimorphism $\a\colon G \to S$ to a finite abelian group $S$, denote by ${\widehat \a} \colon {\widehat S} \to {\widehat G}$ the induced (injective) map of character varieties. Denote $H := \textit{ker }  \a \leq_{f} G$; Hironaka proved, in \cite{Hi97}, that we have the equation  \begin{equation} \label{eq:hi}  b_1(H) = \sum_{i \geq 1} |W_{i}(G) \cap {\widehat \a}({\widehat S})| = b_1(G) + \sum_{i \geq 1} |W_{i}(G) \cap {\widehat \a}({\widehat S \setminus {\hat 1}})| .\end{equation}  This formula asserts that a character $\xi\colon G \to \C^{*}$ such that $\xi \in W_i(G)$ contributes with multiplicity equal to its depth to the Betti number of the cover defined by $\a\colon G \to S$ whenever it factorizes via $\a$. 

In principle, following \cite{Hi97}, we could explicitly determine the jumping loci $W_{i}(G)$ for the Cartwright--Steger surface out of the Alexander module of its fundamental group using an explicit presentation and Fox calculus. There is no conceptual difficulty in doing so, but it is a somewhat daunting task. We will weasel our way out of this undertaking with the following approach.

First, we will denote by $e(G) \in \N$ the least common multiple of the order of the elements of $W_1(G)$, thought of as elements of the group ${\widehat G}$,  and we will refer to it as the \textit{exponent} of $W_1(G)$ (or, by extension, of the Cartwright--Steger surface). This is a positive integer, well defined by virtue of the fact that $W_1(G)$ is a finite set composed of torsion elements of  ${\widehat G}$. (This notion is borrowed  from the notion of exponent of a finite group, and would coincide with it if $W_1(G)$ were a subgroup of ${\widehat G}$. We don't know whether this is the case.)

\begin{lemma} \label{lem:ec} Let $e(G)$ be the exponent of the Cartwright--Steger surface. Then for every integer $\lambda \geq 0$ all cyclic covers $X_{d}$ of  $X$ of order $d := \lambda e(G) + 1$ have $q(X_d) = q(X) = 1$.  The fibration $f_{d} \colon X_{d} \to T$ induced by the Albanese fibration of $X$ is therefore Albanese. \end{lemma}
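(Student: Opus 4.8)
The plan is to read $q(X_d)$ off Hironaka's formula \eqref{eq:hi} and reduce the whole statement to an elementary coprimality observation about the orders of characters. A cyclic cover $X_d$ of order $d$ is classified by an epimorphism $\a\colon G \to \Z/d$, with $X_d$ the cover associated to $H := \ker \a$. Since the first Betti number of a space equals that of its fundamental group, and $b_1 = 2q$ on a compact K\"ahler surface, we have $q(X_d) = b_1(H)/2$. Applying \eqref{eq:hi} with $S = \Z/d$ yields
\[ b_1(H) = b_1(G) + \sum_{i \geq 1} |W_i(G) \cap \widehat{\a}(\widehat{\Z/d} \setminus {\hat 1})|, \]
and because $b_1(G) = 2 q(X) = 2$ by Proposition \ref{prop:cs}, it suffices to show that every summand vanishes, i.e. that no nontrivial character in the image $\widehat{\a}(\widehat{\Z/d})$ lies in $W_1(G)$ (and hence in no $W_i(G)$, by the nesting $W_i(G) \subset W_1(G)$).

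First I would pin down the orders of the competing characters. As $\a$ is surjective, a character in $\widehat{\a}(\widehat{\Z/d})$ has the form $\eta \circ \a$ for a character $\eta$ of $\Z/d$, so its order equals the order of $\eta$ and therefore divides $d$. On the other hand, by the very definition of the exponent, every element of $W_1(G)$ has order dividing $e(G)$. Consequently a character lying in both $W_1(G)$ and $\widehat{\a}(\widehat{\Z/d})$ has order dividing $\gcd(d, e(G))$. Taking $d = \lambda e(G) + 1$ forces $\gcd(d, e(G)) = \gcd(1, e(G)) = 1$, so the only common character is the trivial one; thus $W_1(G) \cap \widehat{\a}(\widehat{\Z/d} \setminus {\hat 1}) = \varnothing$, and likewise at every depth. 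The sum in \eqref{eq:hi} then vanishes and $q(X_d) = 1 = q(X)$. Note that the argument is uniform in $\a$: it uses only that $d$ is coprime to $e(G)$, so it covers \emph{all} cyclic covers of order $d$ at once.

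For the final assertion I would argue that $q(X_d) = 1$ forces the induced fibration to be Albanese. Writing $\pi\colon X_d \to X$ for the covering projection, the composite $f \circ \pi\colon X_d \to T$ pulls back the generator of $H^0(T, \Omega^1_T)$ to a nonzero holomorphic $1$-form on $X_d$; since $h^0(X_d, \Omega^1) = q(X_d) = 1$, this form spans the space of holomorphic $1$-forms. The Albanese map of $X_d$ is then computed by integrating this single form, so it factors through $f \circ \pi$; its image is one-dimensional, $\mathrm{Alb}(X_d)$ is an elliptic curve, and the Stein factorization $f_d\colon X_d \to T$ of $f \circ \pi$ is precisely the Albanese fibration.

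The genuinely delicate inputs lie upstream rather than in this lemma: Proposition \ref{prop:gl} (finiteness and torsion of the $W_i(G)$, which is what makes $e(G)$ well defined) and Hironaka's formula \eqref{eq:hi}. Granting those, the one point requiring care is the order bookkeeping --- confirming that image characters have order dividing $d$ while membership in $W_1(G)$ forces order dividing $e(G)$ --- after which the choice $d \equiv 1 \pmod{e(G)}$ annihilates every contribution. I expect the main obstacle to be purely expository: making the identification $q(X_d) = b_1(H)/2$ and the universality argument for $f_d$ clean, rather than any substantive computation.
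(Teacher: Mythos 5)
Your proposal is correct and follows essentially the same route as the paper: apply Hironaka's formula \eqref{eq:hi}, observe that characters in $\widehat{\a}(\widehat{S})$ have order dividing $d$ while those in $W_1(G)$ have order dividing $e(G)$, and use $\gcd(d,e(G))=1$ to kill every nontrivial contribution, so $q(X_d)=q(X)=1$. Your closing argument that $q(X_d)=1$ forces the pulled-back fibration to be Albanese just spells out what the paper dispatches with ``by the above, this fibration is Albanese.''
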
 
\begin{proof} Let $S :=  \Z_d$  and denote by $\a \colon G\to S$ a cyclic quotient of $G$ of order $d$. Denote by $X_{d}$ the corresponding cyclic cover, so that $\pi_{1}(X_d) = H =  \textit{ker }  \a \leq_{f} G$.
As $H_1(G) = \Z^2$, the quotient map $\a \colon G\to S$ factors through the maximal free abelian quotient of $G$, i.e. the homotopy Albanese map $f_{*} \colon G \to \Z^2$  of  (\ref{eq:homalb}) and  we have the following commutative diagram of fundamental groups
\begin{equation} \label{eq:diag} \xymatrix{
& 1 \ar[d] &
1 \ar[d] & 1 \ar[d] &\\
1 \ar[r] & \Delta \ar[d]^{\cong} \ar[r] &
H \ar[d] \ar[r]& \Z^2 \ar[d] \ar[r]&1\\
1\ar[r]&
[G,G] \ar[r]\ar[d] &
G \ar[d]^{\a}\ar[r]^{f_{*}} & \Z^2 \ar[d] \ar[r]&1\\
 & 1 
 \ar[r]& S \ar[r] \ar[d]  &
S \ar[r] \ar[d] &1 \\
&  & 1 & 1} 
\end{equation}

 By Equation (\ref{eq:hi}) any contribution to $b_{1}(X_d)$ -- hence to $q(X_d)$ -- in excess of $b_1(X) = 2$ comes from nontrivial characters in $W_{1}(G)$ factoring through $\a \colon G \to S$. By definition, the order of the characters in $W_{1}(G)$ divides $e(G)$, while the order of characters in ${\widehat \a}({\widehat S})$ divides $d$. By choosing $d$ to have the form $d = \lambda e(G) + 1$ for $\lambda \geq 0$, $e(G)$ and $d$ are coprime. It follows that $W_{i}(G) \cap {\widehat \a}({\widehat S}) = {\hat 1}$, and the first part of the statement follows. (Note that, if $H_1(X_{d})$ has a nontrivial torsion subgroup, $[H,H] \leq_{f} \Delta \cong [G,G]$ is a {\em proper} subgroup. This has no relevance for us.)  By pull--back, the Albanese fibration of $X$ induces a fibration $f_{d} \colon X_{d} \to T$ (whose induced map in homotopy appears in the top row of the diagram in  (\ref{eq:diag})), where $T$ is an $S$--cover of itself. By the above, this fibration is Albanese. 
\end{proof}

\begin{remark} It would be interesting to know the virtual first Betti number of $X$ or, more modestly, to know the largest Betti number of its finite abelian covers (Proposition \ref{prop:gl} guarantees that the latter number is bounded above). It has been pointed out to us that in \cite{St14} Stover describes a (nonabelian) cover of $X$ with $b_1 = 14$.
Note that the condition $vb_1(X) > 2$ entails that the virtual Albanese dimension of $X$ is $2$. (This latter result has been explicitly observed in \cite{DR17}.)
For the record, using an explicit presentation of $G$ concocted out of information provided in \cite{CKY17} we verified, using GAP, that low index subgroups of $G$ have $b_{1} = 2$. \end{remark}

Now we have information on the first Betti number of ``enough" covers of $X$, and we can proceed as in \cite{So84}. Namely, take a cyclic cover $X_{d}$ of the Cartwright--Steeger surface with $d = \lambda e(G) + 1$ as in  Lemma \ref{lem:ec}. Consider the pull--back fibration $f_{d} \colon X_{d} \to T$, whose generic fiber is isomorphic to $F$, the fiber of the Albanese fibration of $X$. Take the double cover of the base $T$, branched at $2k$ regular values of $f_{d}$. The branch locus uniquely determines a line bundle that we can write as $\mathcal{O}_{T}(p_1+\ldots+p_k)$ where $\{p_1,\ldots,p_k\} \subset T$ are a collection of regular values. Denote by $\Sigma_{k+1} \to T$ the double cover, a surface of genus equal to the subscript. Next, define the fiber product $S_{d,k} :=  X_{d} \times_{T} \Sigma_{k+1}$. Phrased otherwise, $S_{d,k}$ is the double covering of $X_{d}$ determined by the line bundle $f_{d}^{*}\mathcal{O}_{T}(p_1+\ldots+p_k) = \mathcal{O}_{X_d}(F_1+\ldots+F_k)$, where the $F_i = f_{d}^{-1}(p_i)$ are (generic) fibers of the fibration $f_{d} \colon X_{d} \to T$, with branch locus given by $2k$ copies of the generic fiber $F$. Note that $S_{d,k}$ fibers over $\Sigma_{k+1}$.

This construction gives the family of surfaces that we were looking for, as stated by the following theorem that embeds the results above in the template of \cite{So84,PML11}.

\begin{theorem} The surfaces $S_{d,k}$, for $k > 0$ and $d := \lambda e(G) + 1$, $\lambda > 0$, are smooth minimal surfaces of general type with Albanese dimension one whose set of  slopes is dense in the interval $[8,9]$. \end{theorem}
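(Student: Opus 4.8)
The plan is to read off the numerical invariants of $S_{d,k}$ from the standard double cover formulas, check smoothness and minimal general type directly, pin down the irregularity (the real content), and finally deduce density of the slopes. Since $X_d \to X$ is an unramified degree-$d$ cover, Proposition \ref{prop:cs} gives $K_{X_d}^2 = 9d$ and $\chi(\mathcal{O}_{X_d}) = d$. The double cover $\pi\colon S_{d,k}\to X_d$ is branched along $B = 2L$, where $L := \mathcal{O}_{X_d}(F_1+\cdots+F_k) = f_d^{*}M$ for a degree-$k$ line bundle $M$ on $T$; choosing $B$ to be the preimage of $2k$ distinct regular values realizes it as a disjoint union of $2k$ smooth fibers, so $B$ is smooth and $S_{d,k}$ is a smooth surface. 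Using $L^2 = 0$ and $L\cdot K_{X_d} = k(2g(F)-2) = 36k$ in the formulas $K^2_{S_{d,k}} = 2(K_{X_d}+L)^2$ and $\chi(\mathcal{O}_{S_{d,k}}) = 2\chi(\mathcal{O}_{X_d}) + \tfrac12 L\cdot(K_{X_d}+L)$, I would obtain $K^2_{S_{d,k}} = 18d + 144k$ and $\chi(\mathcal{O}_{S_{d,k}}) = 2d + 18k$, so that the slope is
\[ \frac{K^2_{S_{d,k}}}{\chi(\mathcal{O}_{S_{d,k}})} = \frac{9(d+8k)}{d+9k}. \]

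For minimality and general type I would use $K_{S_{d,k}} = \pi^{*}(K_{X_d}+L)$. As $X_d$ is étale over the minimal surface of general type $X$, $K_{X_d}$ is nef and big; since $M$ has positive degree, $L = f_d^{*}M$ is nef, hence $K_{X_d}+L$ is nef and big, and both properties are preserved under pullback by the finite map $\pi$. Therefore $K_{S_{d,k}}$ is nef and big, which forces $S_{d,k}$ to be minimal (no $(-1)$-curves) and of general type.

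The heart of the argument is the irregularity, and this is where I expect the main obstacle. From $\pi_{*}\mathcal{O}_{S_{d,k}} = \mathcal{O}_{X_d}\oplus L^{-1}$ one has $q(S_{d,k}) = q(X_d) + h^1(X_d, L^{-1}) = 1 + h^1(X_d, \mathcal{O}_{X_d}(-kF))$, using $q(X_d)=1$ from Lemma \ref{lem:ec}. I would evaluate the last term through the Leray sequence for $f_d$: the piece $H^1(T, M^{-1})$ contributes $k$ (since $\deg M^{-1} = -k < 0$ on the elliptic curve $T$), while the piece $H^0(T, M^{-1}\otimes R^1 f_{d*}\mathcal{O}_{X_d}) = H^0(T, (f_{d*}\omega_{X_d/T}\otimes M)^{\vee})$ vanishes, because $f_{d*}\omega_{X_d/T}$ is nef (Fujita semipositivity) and a nef bundle on a curve admits no nonzero homomorphism to the negative-degree line bundle $M^{-1}$. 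Hence $q(S_{d,k}) = k+1 = g(\Sigma_{k+1})$. Since the fibration $h\colon S_{d,k}\to \Sigma_{k+1}$ has connected fibers, $h^{*}$ identifies $H^0(\Sigma_{k+1},\Omega^1)$ with all of $H^0(S_{d,k},\Omega^1)$, so the Albanese map factors through $h$ and its image is the curve $\Sigma_{k+1}$; thus $S_{d,k}$ has Albanese dimension one. Controlling the anti-invariant one-forms and showing they all descend from the base curve is precisely what keeps the Albanese dimension from jumping to two.

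Finally, for density I would write $r = k/d$, so that the slope equals $9(1+8r)/(1+9r)$, a strictly decreasing function of $r$ with limits $9$ as $r\to 0$ and $8$ as $r\to\infty$, hence with image $(8,9)$. As $\lambda$ ranges over the positive integers the admissible degrees $d = \lambda e(G)+1$ are unbounded, so the ratios $k/d$ with $k>0$ form a dense subset of $(0,\infty)$; by continuity the corresponding slopes are dense in $(8,9)$, and therefore dense in $[8,9]$.
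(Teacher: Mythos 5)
Your argument is correct and reaches all the conclusions of the theorem, but it substitutes its own machinery for the paper's at each of the three main steps, so a comparison is worthwhile. (i) For minimality, general type and the slope formula, the paper simply invokes Sommese's Lemma 2.1 together with the double-cover formulas of \cite[Section V.22]{BHPV04}; your direct observation that $K_{S_{d,k}} = \pi^{*}(K_{X_d}+L)$ is nef and big (pullback of nef $+$ nef with positive self-intersection under a finite map) is a clean self-contained replacement, and your numbers $K^2 = 18d+144k$, $\chi = 2d+18k$ agree with the paper's $9 - k(g(F)-1)/(2d+k(g(F)-1))$ with $g(F)=19$. (ii) For the irregularity --- the real content, as you say --- the paper computes $p_g(S_{d,k})$ from the structure sequence of $F_1+\cdots+F_k$, using that the restriction $H^1(X_d,\Z)\to H^1(F,\Z)$ vanishes because $f_d$ is Albanese (so $H^1(\mathcal{O}_{X_d})\to H^1(\mathcal{O}_{F_1+\cdots+F_k})$ is zero), and then extracts $q = k+1$ from $\chi$; you instead compute $h^1(X_d,L^{-1})=k$ via the Leray sequence for $f_d$, relative duality, and Fujita semipositivity of $f_{d*}\omega_{X_d/T}$. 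Both routes lean essentially on Lemma \ref{lem:ec} for $q(X_d)=1$, which is the actual novelty of the paper; your vanishing argument (image of a nef bundle in a negative-degree line bundle on a curve would be a nef quotient of negative degree) is sound, just heavier than the paper's appeal to \cite[Lemma IV.12.7]{BHPV04}. (iii) For density, the paper exhibits explicit sequences $d_n, k_n$ converging to slope $9-p/q$; your observation that the slope is the strictly monotone function $9(1+8r)/(1+9r)$ of $r=k/d$, and that the admissible ratios $k/d$ with $d\equiv 1 \pmod{e(G)}$ are dense in $(0,\infty)$ because the spacing $1/d$ can be made arbitrarily small, is a slicker packaging of the same idea. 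No gaps; the two proofs would be interchangeable.
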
 
\begin{proof} The fact that the surfaces  $S_{d,k}$ are minimal of general type with slope \[  \frac{K^2_{S_{d,k}}}{\chi(S_{d,k})} = 9 - \frac{k(g(F) -1)}{2d + k(g(F) - 1) } \in [8,9] \] is discussed in \cite[Lemma 2.1]{So84} (see also \cite{PML11}) -- using information on numerical invariants of branched double coverings that can be found e.g. in  \cite[Section V.22]{BHPV04} --   and is actually true for any choice for $d,k > 0$. 

Next, let's show that  for our choices of $d$ the fibration $S_{d,k} \to \Sigma_{k+1}$ is Albanese. Using the formulae in  \cite[Section V.22]{BHPV04} we have 
\begin{equation} \label{eq:chi} \chi(S_{d,k}) = 2 \chi(X_d) + \frac{1}{2}(\langle kF,K_{X_d}\rangle + \langle kF,kF\rangle) =  2 \chi(X_d) + k(g(F) - 1) \end{equation} where the latter equation follows from the adjunction equality, and 
\begin{equation} \label{eq:pl} p_{g}(S_{d,k}) = p_{g}(X_d) + h^2(\mathcal{O}_{X_d}(-F_1-\ldots-F_k)).\end{equation}
To determine this last term, consider the structure sequence of sheaves for $F_1+\ldots+F_k$, namely 
\[ 0 \longrightarrow \mathcal{O}_{X_d}(-F_1-\ldots-F_k) \longrightarrow \mathcal{O}_{X_d} \longrightarrow \mathcal{O}_{F_1+\ldots+F_k} \longrightarrow 0. \] From the induced long exact sequence in cohomology we can extract the following exact sequence:
\[ \ldots \longrightarrow H^1(\mathcal{O}_{X_d}) \longrightarrow  H^1( \mathcal{O}_{F_1+\ldots+F_k}) \longrightarrow H^2(\mathcal{O}_{X_d}(-F_1-\ldots-F_k)) \longrightarrow H^2(\mathcal{O}_{X_d}) \longrightarrow 0. \]
As $f_{d} \colon X_{d} \to T$ is Albanese, the restriction map $H^1(X_{d},\Z) \to H^{1}(F,\Z)$ is trivial, hence the map $H^1(\mathcal{O}_{X_d}) \to 
H^1( \mathcal{O}_{F_1+\ldots+F_k})$ is the zero map (see e.g. \cite[Lemma IV.12.7]{BHPV04}). Moreover, we have 
$ h^{1}(\mathcal{O}_{F_1+\ldots+F_k}) = kg(F)$ and $h^2(\mathcal{O}_{X_d}) = p_g(X_{d})$. Applying this to Equation (\ref{eq:pl}) we get \[ p_{g}(S_{d,k}) =  2 p_{g}(X_d) + kg(F) \] which, together with Equation (\ref{eq:chi}) gives \[ q(S_{d,k}) = 2q(X_d) + k - 1 = k+1 = q(\Sigma_{k+1}) \] which entails that the fibration $S_{d,k} \to \Sigma_{k+1}$ is Albanese. 

In order to show that the set of slopes achieved by the surfaces $S_{d,k}$ is dense in $[8,9]$, we will need to tweak a bit the calculations of \cite{So84} to take into account the fact that, under our assumptions, not all values of $d$ are allowed. 
Let  $p/q \in (0,1)$ be a rational number, where $p,q$ are positive integers with $0 < p < q $.
The choice of sequences \[ d_{n} =  n  e(G)(q-p) (g(F) -1) + 1,\,\  k_{n} = 2 n e(G)p \] yields surfaces $S_{d_n,k_n}$ where the first index has the form $d = \lambda e(G) + 1$, in particular of Albanese dimension one, and  has the property that  
\[  \lim_n \frac{K^2_{S_{d_n,k_n}}}{\chi(S_{d_n,k_n})} = 9-\lim_n \frac{2ne(G)p(g(F) -1)}{2 n  e(G)(q-p) (g(F) -1) + 2 + 2ne(G)p(g(F) -1)} = 9 - \frac{p}{q}. \]
As the limit set of the slopes contains all rationals in $(8,9)$, it is dense in $[8,9]$
\end{proof} 

\begin{ack} The author would like to thank Rita Pardini and Ziv Ran for useful discussions and for pointing out various inaccuracies. \end{ack}
\begin{note} After completion of this manuscripts, Matthew Stover kindly informed us of his preprint titled {\em ``On general type surfaces with $q = 1$ and $c_2 = 3p_g$"} (\cite{St18}), which contains some results partly overlapping with ours. In particular, Stover shows that the Green--Lazarsfeld sets of the Cartwright--Steger surface are in fact trivial, i.e.  $W_i(G) = \{\hat{1}\}$ for $i \leq 2$, strengthening Proposition \ref{prop:gl} above. This entails that all abelian covers of that surface have irregularity $1$. Moreover, he pointed out that as $G$ is a congruence lattice of positive first Betti number, its virtual Betti number is infinite. Together, these answer the questions posed in the remark following Lemma \ref{lem:ec}  above.  \end{note}


\end{document}